\def\crn#1#2{{\vcenter{\vbox{
        \hbox{\kern#2pt \vrule width.#2pt height#1pt
           }
          \hrule height.#2pt}}}}
\newcommand{\Ad}{\operatorname{Ad}}
\newcommand{\Aut}{\operatorname{Aut}}
\renewcommand{\span}{\operatorname{span}}
\renewcommand{\line}{{\operatorname{line}}}
\newcommand{\ray}{{\operatorname{ray}}}
\newcommand{\gr}{\operatorname{gr}}
\newcommand{\tf}{\operatorname{tf}}
\newcommand{\R}{\mathbb R}
\newcommand{\V}{\mathbb V}
\renewcommand{\P}{\mathbb P}
\newcommand{\Z}{\mathbb Z}
\newcommand{\Up}{\Upsilon}
\newcommand{\gh}{\widehat{g}}
\newcommand{\wh}{\widehat}
\newcommand{\nt}{\widetilde{\nabla}}
\newcommand{\gt}{\widetilde{g}}
\newcommand{\cF}{\mathcal{F}}
\newcommand{\cE}{\mathcal{E}}
\newcommand{\cG}{\mathcal{G}}
\newcommand{\cGt}{\widetilde{\mathcal{G}}} 
\newcommand{\cB}{\mathcal{B}}
\newcommand{\cQ}{\mathcal{Q}}
\newcommand{\cV}{\mathcal{V}}
\newcommand{\cT}{\mathcal{T}}
\newcommand{\cD}{\mathcal{D}}
\newcommand{\fg}{\mathfrak{g}}
\newcommand{\fp}{\mathfrak{p}}
\newcommand{\fo}{\mathfrak{o}}
\newcommand{\fs}{\mathfrak{s}}
\newcommand{\fl}{\mathfrak{l}}
\theoremstyle{plain}
\newtheorem{theorem}{Theorem}[section]
\newtheorem{proposition}[theorem]{Proposition}
\newtheorem{corollary}[theorem]{Corollary}
\theoremstyle{definition}
\theoremstyle{remark}
\newtheorem{remark}[theorem]{Remark}
\numberwithin{equation}{section}
\title[Conformal Tractor Bundles]{Subtleties Concerning Conformal 
 Tractor Bundles}
\author{C. Robin Graham}
\address{Department of Mathematics, University of Washington,
Box 354350\\
Seattle, WA 98195-4350}
\email{robin@math.washington.edu}
\author{Travis Willse}
\address{Mathematical Sciences Institute, Building 27,
The Australian National University\\
Canberra ACT 0200 Australia}
\email{travis.willse@anu.edu.au}
\begin{document}

\maketitle

\thispagestyle{empty}

\renewcommand{\thefootnote}{}
\footnotetext{Partially supported by NSF grant \# DMS 0906035.}   
\renewcommand{\thefootnote}{1}

\section{Introduction}\label{intro}

The goal of this paper is to explain some phenomena arising in the 
realization of tractor bundles in conformal geometry as associated
bundles.  In order to form an associated bundle one chooses a 
principal bundle with normal Cartan connection 
(i.e., a normal parabolic geometry) corresponding to a given conformal 
manifold.  We show that different natural choices can lead to topologically
distinct 
associated tractor bundles for the same inducing representation.  The
nature of the choices is subtle, so we give a careful presentation of the
relevant foundational material which we hope researchers in the field will
find illuminating.  The main considerations apply as well to more general
parabolic geometries.    

We focus particularly on tractor bundles associated to the standard
representation of $O(p+1,q+1)$.  
The paper \cite{BEG} gave a construction of a canonical tractor bundle and 
connection on any conformal manifold $(M,c)$ which are now usually called
the standard tractor bundle and its normal tractor connection.    
This standard tractor bundle has other characterizations and realizations; 
these were studied in \cite{CG2}.  One of the realizations 
is as an associated bundle to a 
principal bundle over the conformal manifold; the 
standard tractor bundle is associated to the standard representation of
$O(p+1,q+1)$.  The complications arise because there are 
different ways to realize a given conformal manifold as a normal
parabolic geometry, corresponding to different choices of structure group 
and lifted conformal frame bundle.  Different such choices can give rise to
different   
tractor bundles with connection associated to the standard representation,
and for many natural choices one does not obtain the standard tractor
bundle with its normal connection.  For example, let 
$\cQ$ denote the model quadric for conformal geometry in signature
$(p,q)$, consisting of the space of null lines for a quadratic form of
signature $(p+1,q+1)$.  If one takes the 
homogeneous space realization $\cQ=O(p+1,q+1)/P^{\operatorname{line}}$, where
$P^\line$ denotes the isotropy group of a fixed null line, then for $pq\neq 
0$ the bundle associated to the standard representation of $P^\line$ is
not the standard tractor bundle.  Moreover its holonomy
(which is trival) is
not equal to the conformal holonomy of $\cQ$ (which is $\{\pm I\}$).     
Recall that $\cQ$ is orientable if its dimension $n=p+q$ is even, so for
$n$ even this phenomenon is not a consequence of failure of  
orientability of the conformal manifold.  The
issue is that this associated bundle does not have the correct topology. 

We begin by recalling in \S\ref{standard} the BEG construction of
the standard tractor bundle and its normal connection.  We then formulate 
a (slight variant of a) 
uniqueness theorem of \cite{CG2} providing conditions on a bundle with 
auxiliary data on 
a conformal manifold which characterize it as the standard tractor bundle
with its normal connection.  Following  
\cite{CG2}, we review the construction of a tractor bundle and connection
via the ambient 
construction and show using the \v{C}ap-Gover Uniqueness Theorem that this 
construction also produces the standard tractor bundle.  

In \S\ref{TCS} we review a fundamental prolongation result which we call
the TM\v{C}S Theorem (for Tanaka, Morimoto, \v{C}ap-Schichl), which asserts 
an equivalence of categories between certain categories of parabolic
geometries and categories of underlying structures on the base manifold.  
Our treatment is similar to that of \cite{CSl} except that we   
parametrize parabolic geometries and underlying structures by triples
$(\fg,P,\Ad)$, where $\fg$ is a $|k|$-graded semisimple Lie algebra, $P$ is
a Lie group with Lie algebra $\fp=\fg^0$, and $\Ad$ is a suitable
representation of $P$ on $\fg$.  Also we are more explicit about the
choices involved in determining an underlying structure.  
We then illustrate the TM\v{C}S Theorem by showing how it can be used to
represent general conformal manifolds and oriented conformal manifolds as
parabolic geometries.  In each case, in order to obtain a category of
parabolic geometries one must make a choice of a Lie group $P$ whose Lie
algebra is the usual parabolic subalgebra $\fp\subset \fs\fo(p+1,q+1)$,
and, depending on the 
choice of $P$, also a choice of a lift of the conformal frame bundle.  
There are several choices, some of which are equivalent.  

In \S\ref{trac} we describe the construction of tractor bundles and
connections as associated bundles for general parabolic geometries, 
and then specialize to the parabolic geometries arising from conformal
structures discussed in \S\ref{TCS}.  We parametrize our associated bundles
by suitably compatible $(\fg,P)$-modules; as for our parametrization of
parabolic geometries we find that this clarifies the dependence on the
various choices.  We make some observations about general tractor bundles
as associated bundles for conformal geometry, and then we specialize to the
question of which choices from \S\ref{TCS} give rise   
to the standard tractor bundle when one takes the $(\fg,P)$-module to be
the standard representation.  There are preferred choices for which one
always obtains the standard tractor bundle:  for conformal manifolds one
should choose $P^\ray$, the subgroup of $O(p+1,q+1)$ preserving a null ray,
and for oriented conformal manifolds one should choose $SP^\ray$, the
subgroup of $SO(p+1,q+1)$ preserving a null ray.  This is well-known and is 
often taken as the definition of the standard tractor bundle.  What is
novel in our discussion is the fact that so many other natural
choices give bundles associated to the standard representation which are
not the standard tractor bundle with its normal connection.     

In \S\ref{trac} we also briefly discuss homogeneous models and conformal
holonomy.  We follow the  
usual convention of defining the conformal holonomy of a conformal manifold 
to be the holonomy of the standard tractor bundle with its normal
connection, and show that for natural choices of principal bundles it often
happens that the 
holonomy of the tractor bundle with normal connection associated to the
standard representation is not equal to the conformal   
holonomy.  We conclude
\S\ref{trac} with a brief discussion of analogous 
phenomena for the parabolic geometries corresponding to generic 2-plane
fields on 5-manifolds, the consideration of which led us to become aware of
these subtleties in the first place.  

Throughout, our conformal structures are of signature $(p,q)$ on
manifolds $M$ of dimension $n=p+q\geq 3$.  

We are grateful to Andreas \v{C}ap and Rod Gover for useful comments and  
suggestions.  

\section{Standard Tractor Bundle}\label{standard}

The paper \cite{BEG} gave a concrete construction of a tractor bundle
$\cT$ on general conformal manifolds.  $\cT$ has rank $n+2$, carries a
fiber metric $h$ of signature 
$(p+1,q+1)$, and has a null rank 1 subbundle $\cT^1$ isomorphic to the
bundle 
$\cD[-1]$ of conformal densities of weight $-1$.  We denote by $\cD[w]$ the
bundle of conformal densities of weight $w$ and by $\cE[w]$ its space of
smooth sections.  The bundle $\cT$ was defined to be 
a particular conformally invariant subbundle of the 2-jet bundle 
of $\cD[1]$.  It was then shown that a choice $g$ of a representative of
the conformal class induces a splitting 
$$
\cT\cong\cD[-1]\oplus TM[-1]\oplus \cD[1],
$$
where 
$TM[w]=TM\otimes \cD[w]$.  With respect to this splitting, a section 
$U\in \Gamma(\cT)$ is represented as a triple
$$
U=
\begin{pmatrix}
\rho\\
\mu^i\\
\sigma
\end{pmatrix}
$$
with $\rho\in \cE[-1]$, $\mu^i\in \Gamma(TM[-1])$, $\sigma\in \cE[1]$.   
Under a conformal change $\gh=e^{2\Up}g$, the representations are
identified by
$$
\begin{pmatrix}
\wh{\rho}\\
\wh{\mu}^i\\
\wh{\sigma}
\end{pmatrix}
=
\begin{pmatrix}
1&-\Up_j&-\tfrac12 \Up_k\Up^k\\
0&\delta^i{}_j&\Up^i\\
0&0&1
\end{pmatrix}
\begin{pmatrix}
\rho\\
\mu^j\\
\sigma
\end{pmatrix}.
$$
Indices are raised and lowered using the tautological 2-tensor 
${\bf g}\in \Gamma(S^2T^*M[2])$ determined by the conformal structure. 
The tractor metric $h$ is defined by 
$$
h(U,U)= 2\rho\sigma + {\bf g}_{ij}\mu^i\mu^j.  
$$
The subbundle $\cT^1$ is defined by $\mu^i=0$, $\sigma=0$, and the map  
$$
\rho\mapsto 
\begin{pmatrix}
\rho\\
0\\
0
\end{pmatrix}
$$
defines a conformally invariant isomorphism $\cD[-1]\cong \cT^1$.  

Note for future reference that since conformal density bundles are trivial,
$\cT$ is isomorphic to  
$TM\oplus \R^2$ as a smooth vector bundle, where $\R^2$ denotes a trivial
rank 2 vector bundle.  It follows that $\cT$ is orientable if and only if
$M$ is orientable.

A connection $\nabla$ on $\cT$ was defined in \cite{BEG} directly in terms  
of the splitting and the chosen representative and the definition was
verified to be conformally invariant and to give $\nabla h=0$.  The
definition is: 
$$
\nabla_i
\begin{pmatrix}
\rho\\
\mu^j\\
\sigma
\end{pmatrix}
=
\begin{pmatrix}
\nabla_i \rho-P_{ik}\mu^k\\
\nabla_i \mu^j+\delta_i{}^j\rho+P_i{}^j\sigma\\
\nabla_i\sigma-\mu_i
\end{pmatrix}.
$$
The occurrences of $\nabla_i$ on the right-hand side denote the
connection induced by the representative $g$ on the density bundles, or
that connection coupled with the Levi-Civita connection of $g$ in the case
of $\nabla_i\mu^j$.  
$P_{ij}$ denotes the Schouten tensor of $g$.

A uniqueness theorem for such a tractor bundle 
was proven in \S 2.2 of \cite{CG2}.  We state the result
assuming a conformal manifold, whereas in \cite{CG2} the existence  
of the conformal structure was part of the conclusion.  Let $(M,c)$ be a 
conformal manifold with tautological tensor ${\bf g}\in
\Gamma(S^2T^*M[2])$. Consider the following data.  Let $\cT$ be  
a rank $n+2$ vector bundle over $M$ with metric $h$ of signature
$(p+1,q+1)$ and connection $\nabla$ such that $\nabla h=0$.  Let $\cT^1$ be
a null line subbundle of $\cT$ equipped with an isomorphism 
$\cT^1\cong\cD[-1]$.  If $v\in T_xM$ and $U\in \Gamma(\cT^1)$,
differentiating $h(U,U)=0$ shows that 
$\nabla_vU \in (\cT^1_x)^\perp$.  The projection of $\nabla_vU$
onto $(\cT^1_x)^\perp/\cT^1_x$ is tensorial in $U$.  Invoking 
the isomorphism $\cT^1\cong\cD[-1]$, it follows that 
$v\otimes U\mapsto \nabla_vU + \cT^1$ induces a bundle map $\tau:TM\otimes
\cD[-1]\rightarrow (\cT^1)^\perp/\cT^1$.  The 
metric $h$ determines a metric $h_0$ of signature $(p,q)$ on
$(\cT^1)^\perp/\cT^1$.  The  
data $(\cT,\cT^1,h,\nabla)$ are said to be compatible with the conformal
structure if $\tau^*h_0={\bf g}$.  We refer to \cite{CG2}     
for the formulation of the curvature condition for $\nabla$ to be called
normal.    

\bigskip
\noindent
{\bf \v{C}ap-Gover Uniqueness Theorem}.  {\it Let $(M,c)$ be a conformal
  manifold.   Up to 
  isomorphism, there is a unique $(\cT,\cT^1,h,\nabla)$ as above compatible
  with the conformal structure with $\nabla$ normal.   
}

\bigskip
Such a $\cT$ is called a (or the) standard tractor bundle and $\nabla$ its
normal tractor connection.  Even though $(\cT,\cT^1,h,\nabla)$ is unique up
to isomorphism, 
there are several different realizations.  The tractor bundle and
connection constructed in \cite{BEG} satisfy the conditions and so provide 
one realization.   

Another realization of the standard tractor bundle discussed in \cite{CG2}
is via the ambient 
construction of \cite{FG1}, \cite{FG2}.  Let $\cG\rightarrow M$ be the
metric bundle of $(M,c)$, i.e. 
$\cG=\{(x,g_x):x\in M, g\in c\}\subset S^2T^*M$.  $\cG$ carries 
dilations $\delta_s$ for $s>0$ defined by $\delta_s(x,g)=(x,s^2g)$, and the  
tautological 2-tensor ${\bf g}$ can be viewed as a section 
${\bf g}\in\Gamma(S^2T^*\cG)$ satisfying $\delta_s^*{\bf g}=s^2{\bf g}$.  
An ambient metric $\gt$ for $(M,c)$ is a metric of signature 
$(p+1,q+1)$ on a dilation-invariant neighborhood $\cGt$ of $\cG\times
\{0\}$ in $\cG\times \R$ 
satisfying $\delta_s^*\gt = s^2\gt$, $\iota^*\gt={\bf g}$, 
and a vanishing condition on its Ricci curvature.  (In order to construct
the standard tractor bundle and its normal connection, it suffices that the 
tangential components of the Ricci curvature of $\gt$ vanish when
restricted to $\cG\times \{0\}$.)  Here      
$\iota:\cG\rightarrow \cG\times \R$ is defined by $\iota(z)=(z,0)$ and the
dilations extend to $\cG\times \R$ acting in the $\cG$ factor alone.  

The ambient realization of $\cT$ is defined as follows.  The fiber $\cT_x$
over $x\in M$ is
$$
\cT_x= \left\{U\in\Gamma(T\cGt|_{\cG_x}):\delta_s^*U=s^{-1}U\right\},
$$
where $\cG_x$ denotes the fiber of $\cG$ over $x$.  
The homogeneity condition implies that $U$ is determined by its
value at any single point of $\cG_x$, so $\cT_x$ is a vector space of
dimension $n+2$.  
The tractor metric $h$ and the normal tractor connection $\nabla$ can be
realized as the restrictions to $\cG$ of $\gt$ and its Levi-Civita
connection $\nt$.  The null subbundle $\cT^1$ is
the vertical bundle in $T\cG\subset T\cGt|_\cG$.  The infinitesimal 
dilation $T$ defines a global section of $\cT^1[1]$, so determines the 
isomorphism $\cT^1\cong\cD[-1]$.  It can be verified that the tractor
bundle and connection defined this 
way satisfy the conditions above, so the uniqueness   
theorem implies that the ambient construction gives 
a standard tractor bundle with its normal connection.  An isomorphism  
with the realization in \cite{BEG} is written down directly in \cite{GW} in  
terms of a conformal representative.

We mention in passing that the formulation of the ambient construction in
\cite{CG2} appears to be more general than that above in that it allows an   
arbitrary ambient manifold $\cGt$ with a free $\R_+$-action containing
$\cG$ as a hypersurface.  But at least near $\cG$ there is no real gain in
generality:  if $\cGt$ admits a metric $\gt$ such that $\iota^*\gt={\bf
  g}$, then the normal bundle of $\cG$ in $\cGt$ is trivial so that near 
$\cG$, $\cGt$ is diffeomorphic to a neighborhood of $\cG\times \{0\}$ in
$\cG\times \R$.  This is because the 1-form dual to $T$ with respect to
$\gt$ gives a global nonvanishing section of $(T\cGt/T\cG)^*$.  

The third usual construction of the standard tractor bundle is as an
associated bundle to the Cartan bundle for the conformal structure.  We 
postpone discussion of this construction to \S\ref{trac}.

\section{Tanaka-Morimoto-\v{C}ap-Schichl Theorem}\label{TCS}

A fundamental result in the theory of parabolic geometries asserts 
an equivalence of 
categories between parabolic geometries of a particular type $(\fg,P)$
and certain underlying structures.  
There are different forms of the result due to Tanaka \cite{T},
Morimoto \cite{M}, and \v{C}ap-Schichl \cite{CSc}.  We state a version
which is a slight extension of Theorem 3.1.14 in \cite{CSl} and refer  
to it as the TM\v{C}S Theorem.     

Let $\fg=\fg_{-k}\oplus \cdots \oplus \fg_k$ be a $|k|$-graded
semisimple Lie algebra with associated filtration 
$\fg^i=\fg_{i}\oplus \cdots \oplus \fg_k$  and 
subalgebras $\fp=\fg^0$ and $\fg_-=\fg_{-k}\oplus \cdots \oplus \fg_{-1}$.
Let $P$ be a Lie group with Lie algebra $\fp$ and let   
$\Ad:P\rightarrow \Aut_{\operatorname{filtr}}(\fg)$ be a representation of  
$P$ as filtration-preserving Lie algebra automorphisms of $\fg$ such  
that $p\mapsto\Ad(p)|_\fp$ is the usual adjoint representation of $P$ on
$\fp$.  
Typically there is a Lie group $G$ with Lie algebra $\fg$ containing $P$ as
a parabolic subgroup with respect to the given grading, and $\Ad$ is the   
restriction to $P$ of the adjoint representation of $G$.  But we assume
neither that there exists such a $G$ nor that we have chosen one.  
For fixed $|k|$-graded $\fg$,  
two choices $(\fg,P_1,\Ad_1)$ and $(\fg,P_2,\Ad_2)$ will be   
regarded as equivalent from the point of view of the TM\v{C}S Theorem if
there is an isomorphism $\gamma:P_1\rightarrow P_2$ of Lie groups which
induces the identity on the common Lie algebra $\fp$ of $P_1$ and $P_2$ and
which satisfies $\Ad_2\circ \gamma = \Ad_1$.    

Given data $(\fg,P,\Ad)$ as above, the Levi subgroup $P_0\subset P$ is
defined by 
$$
P_0=\{p\in P: \Ad(p)(\fg_i)\subset \fg_i, -k\leq i\leq k\}.
$$
We prefer the notation $P_0$ rather than the usual $G_0$ since we do not
choose a group $G$, and also to emphasize that $P_0$ depends on $P$.

A parabolic geometry of type $(\fg,P,\Ad)$ (or just $(\fg,P)$ if the
representation $\Ad$ is understood) on a manifold $M$ is a
$P$-principal bundle $\cB\rightarrow M$ together with a Cartan connection 
$\omega:T\cB\rightarrow \fg$.  The definition of a Cartan connection depends
only on the data $(\fg,P,\Ad)$; see, for example, \cite{S}.    
We refer to \cite{CSl} for the conditions on   
the curvature of $\omega$ for the parabolic geometry to be called regular
and normal.   

Next we formulate the notion of an underlying structure of type
$(\fg,P,\Ad)$ on a manifold $M$.  The first part of the data consists of a
filtration 
$TM=T^{-k}M\supset \cdots \supset T^{-1}M\supset \{0\}$ of $TM$ 
compatible with the Lie bracket such that at each point $x\in M$ the
induced Lie algebra structure 
on the associated graded $\gr(T_xM)$ (the symbol algebra) is 
isomorphic to $\fg_-$.  We denote by $\cF(\fg_-,\gr(TM))$ the induced 
frame bundle of $\gr(TM)$ whose structure group is the group
$\Aut_{\gr}(\fg_-)$ of 
graded Lie algebra automorphisms of $\fg_-$ and whose fiber over 
$x$ consists of all the graded Lie algebra isomorphisms 
$\fg_-\rightarrow \gr(T_xM)$.  The second part of the data is a 
$P_0$-principal bundle $E\rightarrow M$ equipped with a bundle map 
$\Phi:E\rightarrow \cF(\fg_-,\gr(TM))$ covering the identity on $M$ which
is equivariant with respect to the homomorphism 
$\Ad:P_0\rightarrow \Aut_{\gr}(\fg_-)$ in the sense that 
$\Phi(u.p)=\Phi(u).\Ad(p)$ for $p\in P_0$, $u\in E$.  An underlying
structure of type $(\fg,P,\Ad)$ on $M$ is such a filtration of $TM$
together with such a $P_0$-principal bundle $E$ and map $\Phi$.  

There are notions of morphisms of parabolic geometries and of underlying
structures of type $(\fg,P,\Ad)$ which make these into categories.   A
morphism of parabolic 
geometries $\cB_1\rightarrow M_1$ and $\cB_2\rightarrow M_2$ of type
$(\fg,P,\Ad)$ is a principal bundle morphism $\phi:\cB_1\rightarrow \cB_2$ 
such that $\phi^*\omega_2=\omega_1$.  A morphism of underlying structures
$E_1\rightarrow M_1$ and $E_2\rightarrow M_2$ 
of type $(\fg,P,\Ad)$ is a principal bundle morphism $\phi:E_1\rightarrow
E_2$ which covers a filtration-preserving local diffeomorphism 
$f:M_1\rightarrow M_2$ and
which is compatible with the maps $\Phi_1$, $\Phi_2$ in the sense that 
$\Phi_2\circ\phi=f_*\circ\Phi_1$, where 
$f_*:\cF(\fg_-,\gr(TM_1))\rightarrow \cF(\fg_-,\gr(TM_2))$ is the map on
the frame bundles induced by the differential of $f$.  If $(\fg,P_1,\Ad_1)$
and $(\fg,P_2,\Ad_2)$ are equivalent from the point of view of the TM\v{C}S
Theorem as defined above, then composition of the principal bundle actions
with $\gamma$ 
induces an equivalence of categories between the categories of parabolic
geometries of 
types $(\fg,P_1,\Ad_1)$ and $(\fg,P_2,\Ad_2)$ and between the categories of 
underlying structures of types $(\fg,P_1,\Ad_1)$ and $(\fg,P_2,\Ad_2)$.  

The simplest and most common situation is when $\Ad:P_0\rightarrow
\Aut_{\gr}(\fg_-)$ is injective.  Then $\Phi$ is a bijection between $E$ 
and $\Phi(E)\subset\cF(\fg_-,\gr(TM))$, and $\Phi(E)$ is a subbundle of 
$\cF(\fg_-,\gr(TM))$ with structure group $\Ad(P_0)\cong P_0$.  It is not
hard to see that 
this association defines an equivalence between the category of underlying
structures of type $(\fg,P,\Ad)$ and the category of reductions of 
structure group of the frame bundle  
$\cF(\fg_-,\gr(TM))$ of filtered manifolds of type $\fg_-$ 
to $\Ad(P_0)\subset\Aut_{\gr}(\fg_-)$.  
In general, an underlying structure determines the bundle $\Phi(E)$, 
which is still a reduction of $\cF(\fg_-,\gr(TM))$ to structure group  
$\Ad(P_0)\subset\Aut_{\gr}(\fg_-)$.  
But if $\Ad:P_0\rightarrow \Aut_{\gr}(\fg_-)$ is not injective then   
the underlying structure contains more information.  In all the cases we
will consider, the kernel of  
$\Ad:P_0\rightarrow\Aut_{\gr}(\fg_-)$ is discrete.  Then 
$\Ad:P_0\rightarrow \Ad(P_0)$ and 
$\Phi:E\rightarrow \Phi(E)$ are covering maps, and to fix an underlying
structure one 
must also choose the lift $\Phi:E\rightarrow \Phi(E)$ of the reduced bundle
$\Phi(E)\subset \cF(\fg_-,\gr(TM))$ to a $P_0$-bundle.   

\bigskip
\noindent
{\bf TM\v{C}S Theorem}.  {\it Let $\fg$ be a $|k|$-graded semisimple Lie
  algebra 
such that none of 
the simple ideals of $\fg$ is contained in $\fg_0$, and such that
$H^1(\fg_-,\fg)^1=0$.  Let $P$ be a Lie group with Lie algebra $\fp$ and
let $\Ad:P\rightarrow \Aut_{\operatorname{filtr}}(\fg)$ be a representation
of $P$ as filtration-preserving Lie algebra automorphisms of $\fg$ such   
that $p\mapsto\Ad(p)|_\fp$ is the usual adjoint representation of $P$ on
$\fp$.  
Then there is an equivalence of categories between 
normal regular parabolic geometries of type $(\fg,P,\Ad)$ and underlying 
structures of type $(\fg,P,\Ad)$.  }

\bigskip
\noindent
Here $H^1(\fg_-,\fg)^1$ denotes the 1-piece in the filtration of
the first Lie algebra cohomology group.  All the examples we will consider 
satisfy $H^1(\fg_-,\fg)^1=0$.  

The discussion in \cite{CSl} is in terms of categories of parabolic
geometries and underlying structures of type $(G,P)$ and assumes from the
outset that that $P$ is a parabolic subgroup of $G$.   
Our point in formulating parabolic geometries and underlying structures of  
type $(\fg,P,\Ad)$ rather than type $(G,P)$ is not really to extend 
the discussion to the case that such a $G$ might not exist.  There is such
a group $G$ for all the examples we care about.  Rather, the point is 
to emphasize that the choice of a particular such $G$ is irrelevant as far
as the TM\v{C}S Theorem is concerned.  The fact that the TM\v{C}S Theorem
holds in the generality stated above has been communicated to us by Andreas  
\v{C}ap.  The emphasis on $\fg$ rather than $G$ and further
generalization in this direction are fundamental aspects of the work of 
Morimoto \cite{M}.           

Consider the case of general conformal structures of signature
$(p,q)$, $p+q=n\geq 3$.  The 
filtration of $TM$ is trivial, the frame bundle $\cF$ is the full frame
bundle of $M$, and a conformal structure is equivalent to a reduction of
the structure group of $\cF$ to  
$CO(p,q)=\R_+\, O(p,q)$.  The Lie algebra $\fg$ is $\fs\fo(p+1,q+1)$ and
we will consider various possibilities for $P$.  Take the quadratic  
form defining $\fs\fo(p+1,q+1)$ to be $2x^0x^\infty +h_{ij}x^ix^j$ for some
$h_{ij}$ of signature $(p,q)$.  Writing the matrices in terms of $1\times
n\times 1$ blocks, the Levi subgroups $P_0$ will be of the form  
\begin{equation}\label{P0}
P_0=\left\{p=
\begin{pmatrix}
\lambda&0&0\\
0&m&0\\
0&0&\lambda^{-1}
\end{pmatrix}
\right\}
\end{equation}
with various restrictions on $\lambda\in \R\setminus \{0\}$ and $m\in
O(p,q)$.  The matrices in $P$ 
will be block upper-triangular.  $\fg_-\cong \R^n$ consists of matrices of
the form  
$$
\begin{pmatrix}
0&0&0\\
x^i&0&0\\
0&-x_j&0
\end{pmatrix}
$$
with $x\in \R^n$, and $h_{ij}$ is used to lower the index.  Under the  
adjoint action, $p\in P_0$ acts on $\fg_-$ by $x\mapsto \lambda^{-1}mx$.    

A natural first choice is to take $P$ to be the subgroup $P^\ray$ of
$G=O(p+1,q+1)$ preserving the ray $\R_+ e_0$, with $\Ad$ the restriction of
the adjoint representation of $O(p+1,q+1)$.  Then $P^\ray_0$ is given by
\eqref{P0} with the restrictions $\lambda>0$, $m\in O(p,q)$.    
The map $\Ad:P_0^\ray\rightarrow CO(p,q)$ is an isomorphism, so an
underlying structure is exactly a conformal structure.   

There is another choice of $P$ which is equivalent to $P^\ray$ from the
point of view of the TM\v{C}S Theorem.  Namely, consider the subgroup 
$PP$ of $PO(p+1,q+1)=O(p+1,q+1)/\{\pm I\}$ preserving the line through
$e_0$ in the projective action, with $\Ad$ induced by the inclusion of 
$PP$ as a subgroup of $G=PO(p+1,q+1)$.  The coset projection
$P^\ray\rightarrow PP$ is an isomorphism which maps one $\Ad$
representation to the other, so these choices of $P$ are equivalent from
the point of view of the TM\v{C}S Theorem.

If $n$ is odd, there is yet another choice of $P$ also equivalent from
the point of view of the TM\v{C}S Theorem.  This is the subgroup $SP^\line$
of $SO(p+1,q+1)$ preserving the line through $e_0$, with $\Ad$ 
induced by the inclusion $SP^\line\subset O(p+1,q+1)$.  Observe that    
$SP^\line_0$ corresponds to $\lambda\neq 0$, $m\in SO(p,q)$.  If
$\lambda<0$, $m\in SO(p,q)$, and $n$ is odd, then $\lambda^{-1}m$ has
negative determinant, and one sees easily that 
$\Ad:SP_0^\line\rightarrow CO(p,q)$ is an isomorphism.  
The map $A\mapsto \det(A)A$ is an isomorphism from $P^\ray$ to $SP^\line$
which maps the $\Ad$ representation of $P^\ray$ on $\fs\fo(p+1,q+1)$ to
that of $SP^\line$.  Thus $P^\ray$ 
and $SP^\line$ are equivalent from the point of view of the TM\v{C}S
Theorem if $n$ is odd.

The TM\v{C}S Theorem asserts an equivalence of categories between conformal
structures and normal parabolic geometries of type
$(\fs\fo(p+1,q+1),P^\ray)$.  (The regularity condition is automatic for
conformal geometry since $\fs\fo(p+1,q+1)$ is $|1|$-graded.)  
Thus, for each conformal manifold, there is a $P^\ray$-principal bundle
$\cB^\ray$ carrying a normal Cartan connection, and it is unique up to 
isomorphism.  This is of course a classical result going back to Cartan.  
We will call this parabolic geometry of type $(\fs\fo(p+1,q+1),P^\ray)$ the
canonical parabolic geometry realization of the conformal manifold $(M,c)$.   
One may equally well choose to represent the canonical parabolic geometry 
using the structure group $PP$ (or $SP^\line$ if $n$ is 
odd), since these categories of parabolic geometries are equivalent.

It is instructive to identify the principal bundles for model $M$.  
For instance, suppose we take $M=S^p\times S^q$ to be the space of null
rays, with conformal structure determined by the metric $g_{S^p}-g_{S^q}$.  
We have $M=O(p+1,q+1)/P^\ray$, so the canonical parabolic geometry
realization determined by the TM\v{C}S Theorem is $\cB=O(p+1,q+1)$ with
Cartan connection  
the Maurer-Cartan form.  Alternately we can choose $M$ to be the   
quadric $\cQ=(S^p\times S^q)/\Z_2$ embedded in  
$\P^{n+1}$ as the set of null lines.  We can realize 
$\cQ=PO(p+1,q+1)/PP$ and view this as a parabolic geometry for $P^\ray$ via 
the isomorphism $P^\ray\cong PP$.  So the canonical parabolic geometry
realization of the quadric is  
$PO(p+1,q+1)$ with its Maurer-Cartan form as Cartan connection.  Thus 
in this sense both 
$PO(p+1,q+1)/PP$ and $O(p+1,q+1)/P^\ray$ are homogeneous models for  
the category of parabolic geometries of type $(\fs\fo(p+1,q+1),P^\ray)$.   

If $n$ is odd, then there is an alternate
homogeneous space realization of $\cQ$ using $SP^\line$; namely as
$\cQ=SO(p+1,q+1)/SP^\line$.  
The uniqueness assertion inherent in the TM\v{C}S Theorem imples that this 
realization must be isomorphic to that above.  Indeed, 
the map $A\mapsto \det(A)A$ determines an isomorphism 
$PO(p+1,q+1)\rightarrow SO(p+1,q+1)$ of the two parabolic geometry
realizations of $\cQ$.   

Next let us choose $P$ to be the subgroup $P^\line$ of $O(p+1,q+1)$
preserving the line spanned by the first basis vector $e_0$, with $\Ad$
induced by the inclusion in $G=O(p+1,q+1)$.  The Levi
factor $P^\line_0$ corresponds to 
the conditions $\lambda\neq 0$, $m\in O(p,q)$.  We have 
$\Ad(P^\line_0)=CO(p,q)$, so an underlying structure includes the data of a
conformal structure.  But now $\Ad$ is not injective; its kernel is $\{\pm
I\}$.  So to determine an underlying structure we must additionally choose
a lift $E$ of the conformal frame bundle to a $P_0^\line$-bundle.    
Such a lift always exists since $P_0^\line$ is a product:  
$P^\line_0\cong P_0^\ray\times \{\pm I\}$.  If $\cF_c$ denotes the
conformal frame bundle of $M$ with  
structure group $CO(p,q)\cong P_0^\ray$, then we can take 
$E=\cF_c\times \{\pm I\}$ with the product action of $P^\line_0$, and can
take the map $\Phi$ in the definition of underlying structures to be the
projection onto $\cF_c$.  Since $P^\line\cong P^\ray\times \{\pm I\}$, if 
$(\cB^\ray,\omega^\ray)$ denotes the canonical parabolic geometry
realization of the conformal manifold, then the bundle $\cB^\line$ produced 
by the TM\v{C}S Theorem 
for this choice of $E$ is just $\cB^\line=\cB^\ray\times \{\pm I\}$, 
and the Cartan connection is the pullback of $\omega^\ray$ to $\cB^\line$
under the obvious projection.    

However, depending on the topology of $M$, there may be a number of other 
inequivalent lifts $E$ 
of the conformal frame bundle to a $P^\line_0$-bundle, which will determine
inequivalent $P^\line$-principal bundles $\cB$ with  normal Cartan
connection via the TM\v{C}S Theorem.  For instance, consider the quadric
$\cQ$.  The product bundle 
constructed in the previous paragraph gives rise to the realization 
$\cQ=(PO(p+1,q+1)\times \{\pm I\})/P^\line$, with Cartan bundle
$\cB=PO(p+1,q+1)\times \{\pm I\}$.  On the other hand, the geometrically
obvious realization of  
$\cQ$ as a homogeneous space for $P^\line$ is as $\cQ=O(p+1,q+1)/P^\line$.
If $p=0$ or  
$q=0$, then $\cQ=S^n$ is simply connected so there is only one lift.  Indeed,
$O(n+1,1)\cong PO(n+1,1)\times \{\pm I\}$, corresponding to the
decomposition into time-preserving and time-reversing transformations.  But
if $pq\neq 0$, then $O(p+1,q+1)$ and $PO(p+1,q+1)\times \{\pm I\}$ are
inequivalent as $P^\line$-principal bundles over $\cQ$, as we will see in
the next section.  

There are analogues of all these choices for oriented conformal
structures.  In this case the structure group reduction is to
$CSO(p,q)=\R_+SO(p,q)\subset 
CO(p,q)$.  A natural choice is to take $P$ to be $SP^\ray$, the subgroup of 
$SO(p+1,q+1)$ preserving the null ray, for which $SP^\ray_0$ corresponds to 
$\lambda>0$, $m\in SO(p,q)$.  We have that $\Ad:SP^\ray_0\rightarrow
CSO(p,q)$ is an isomorphism, so in all dimensions and signatures underlying
structures of type $(\fs\fo(p+1,q+1),SP^\ray)$ are the same as oriented
conformal structures.  The parabolic geometry of type
$(\fs\fo(p+1,q+1),SP^\ray)$ determined by the TM\v{C}S Theorem is a
reduction to structure group $SP^\ray$ of the canonical parabolic geometry
of type $(\fs\fo(p+1,q+1),P^\ray)$ determined by the same conformal
structure but forgetting the orientation.    

If $n$ is even, for oriented conformal structures a choice of $P$ 
equivalent to $SP^\ray$ from the point of view of the TM\v{C}S   
Theorem is the subgroup $PSP$ of $PSO(p+1,q+1)$ preserving the null line in 
the projective action.  In all dimensions and signatures, a homogeneous
model is $S^p\times S^q= SO(p+1,q+1)/SP^\ray$.  The quadric $\cQ$ is
orientable if $n$ is even, and in this case it provides another homogeneous 
model for parabolic geometries of type
$(\fs\fo(p+1,q+1),SP^\ray)$:  $\cQ= PSO(p+1,q+1)/PSP$.  

For even $n$, a choice of $P$ for oriented conformal structures analogous 
to $P^\line$ above is $SP^\line$, since $\lambda^{-1}m$ remains
orientation-preserving for $\lambda<0$ if $n$ is even.   
For this choice the structure group reduction
is to $\Ad(SP_0^\line)=CSO(p,q)$.  But $\Ad$ has kernel $\{\pm I\}$, so a 
lift of the oriented conformal frame bundle must be chosen to determine an
underlying structure.  One has the
product decomposition $SP_0^\line\cong SP_0^\ray\times \{\pm I\}$, so one
choice is always the product lift.  But if $pq\neq 0$, the realization 
$\cQ = SO(p+1,q+1)/SP^\line$ corresponds to an inequivalent lift.

\section{Tractor Bundles as Associated Bundles}\label{trac}

Let $M$ be a manifold with a parabolic geometry $(\cB,\omega)$ of type
$(\fg,P,\Ad)$.   
There is an associated vector bundle $\cV\rightarrow M$ corresponding to
any finite-dimensional representation $\rho:P\rightarrow GL(V)$.  The
sections of $\cV$ can be 
identified with the maps $f:\cB\rightarrow V$ which are $P$-equivariant in
the sense that $R_p^*f=\rho(p^{-1})f$ for all $p\in P$.  
Suppose moreover that $(V,\rho)$ is actually a $(\fg,P)$-representation,
that is 
there is an action $\rho:\fg\rightarrow \fg\fl(V)$ of $\fg$ on $V$  
which is compatible with the $P$-action in the sense 
that the infinitesimal action of $\fp$ obtained by differentiating the
action of $P$ agrees with the restriction of the action of $\fg$ to $\fp$. 
We will say that the $(\fg,P)$-module $(V,\rho)$ is $\Ad$-compatible if 
$$
\rho\left(\Ad(p)(Z)\right)=\rho(p)\rho(Z)\rho(p^{-1})\qquad\quad 
p\in P,\;Z\in \fg.
$$
In this case there is an induced linear connection $\nabla$ on $\cV$
defined as follows.  Let $f$ be a section of $\cV$ and let $X$ be a vector
field on $M$.  Choose a lift ${\bar X}$ of $X$ to $\cB$ and set
\begin{equation}\label{connection}
\nabla_Xf={\bar X}f+\rho(\omega({\bar X}))f.
\end{equation}
The fact that $\omega$ reproduces generators of fundamental vector fields,
the equivariance of $f$, 
and the compatibility of the $(\fg,P)$-actions implies that the right-hand
side is unchanged upon adding a vertical vector field to ${\bar X}$.  Thus  
$\nabla_X f$ is independent of the choice of lift ${\bar X}$.  So one may
as well take ${\bar X}$ to be $P$-invariant.  Then ${\bar X}f$ is clearly
$P$-equivariant, and one checks easily 
that the $\Ad$-compatibility implies that $\rho(\omega({\bar X}))f$ is 
$P$-equivariant.  Thus $\nabla_Xf$ is a section of $\cV$.  The
resulting map $(X,f)\mapsto \nabla_Xf$ defines a connection on $\cV$.    
We call $(\cV,\nabla)$ the tractor bundle and tractor connection 
for the parabolic geometry $(\cB,\omega)$ associated to the
$\Ad$-compatible $(\fg,P)$-module $(V,\rho)$.      

As discussed in the previous section, typically one can find a Lie group
$G$ with Lie algebra $\fg$ which contains   
$P$ as a parabolic subgroup and which induces the given $\Ad$.
If $(V,\rho)$ is any finite-dimensional representation of $G$, the 
induced representations of $\fg$ and $P$ define a   
$(\fg,P)$-module structure which is automatically $\Ad$-compatible.  So
there is a tractor bundle and connection associated to any
finite-dimensional representation of any such group $G$.  We will call this
the tractor bundle and connection associated to the restriction to
$(\fg,P)$ of the $G$-module $(V,\rho)$.  

We saw in \S\ref{TCS} that one can choose different normal
parabolic geometries corresponding to a given conformal manifold $(M,c)$.
As we will see, different choices can give rise to different bundles
associated 
to the same $O(p+1,q+1)$-module $(V,\rho)$.  Recall that we have a 
canonical parabolic geometry corresponding to $(M,c)$:
the normal parabolic geometry of type $(\fs\fo(p+1,q+1),P^\ray)$.  Applying
the associated bundle construction for this choice gives a canonical   
tractor bundle and connection associated to any $O(p+1,q+1)$-module
$(V,\rho)$.    

We first compare tractor bundles and connections for the product parabolic  
geometry $(\cB^\line,\omega^\line)$ of type $(\fs\fo(p+1,q+1),P^\line)$
with those for the canonical parabolic geometry. 
Recall that the product parabolic geometry was defined as follows.
If $(\cB^\ray,\omega^\ray)$ is the canonical parabolic geometry, 
then $\cB^\line=\cB^\ray\times \{\pm I\}$ with the product action of  
$P^\line\cong P^\ray\times \{\pm I\}$, and $\omega^\line$ is the pullback
of $\omega^\ray$ under the projection $\cB^\line\rightarrow \cB^\ray$.  The 
bundle $\cB^\line$ may alternately be described as the $P^\line$-principal
bundle associated to the $P^\ray$-principal bundle $\cB^\ray$ by the action
of $P^\ray$ on $P^\line$ by left translation.   

Let $(V,\rho^\ray)$ be an $\Ad$-compatible
$(\fs\fo(p+1,q+1),P^\ray)$-module.  We will say that an $\Ad$-compatible 
$(\fs\fo(p+1,q+1),P^\line)$-module $(V,\rho^\line)$ extends $(V,\rho^\ray)$ 
if $\rho^\line=\rho^\ray$ on $\fs\fo(p+1,q+1)$ and 
$\rho^\line|_{P^\ray}=\rho^\ray$.  For a given $(V,\rho^\ray)$, there are
always at least two choices of such $\rho^\line$; namely those determined
by the two choices $\rho^\line(-I)=\pm I_V$.  One checks easily that either 
choice of $\pm$ defines an $\Ad$-compatible
$(\fs\fo(p+1,q+1),P^\line)$-module.   

\begin{proposition}\label{Pline}
Let $(M,c)$ be a conformal manifold.  Let $(V,\rho^\ray)$ be an
$\Ad$-compatible $(\fs\fo(p+1,q+1),P^\ray)$-module and let $(V,\rho^\line)$ 
be an $\Ad$-compatible $(\fs\fo(p+1,q+1),P^\line)$-module which extends 
$\rho^\ray$.  Then the tractor bundle and connection 
associated to the $(\fs\fo(p+1,q+1),P^\line)$-module $(V,\rho^\line)$  
for the product parabolic geometry $(\cB^\line,\omega^\line)$  
are naturally isomorphic to the tractor bundle and connection 
associated to the $(\fs\fo(p+1,q+1),P^\ray)$-module $(V,\rho^\ray)$
for the canonical parabolic geometry $(\cB^\ray,\omega^\ray)$.
\end{proposition}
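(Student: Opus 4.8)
The plan is to exhibit the natural map $\Phi\colon\cV^\line\to\cV^\ray$ of tractor bundles given by restriction, and then to verify directly from \eqref{connection} that it intertwines the tractor connections; the only genuine work is a short equivariance check.

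First I would recall from \S\ref{TCS} that $-I$ is central in $O(p+1,q+1)$ and lies in $P^\line$ but not in $P^\ray$, so that $P^\line=P^\ray\sqcup(-I)P^\ray$ is the internal direct product $P^\ray\times\{\pm I\}$; that $\cB^\line=\cB^\ray\times\{\pm I\}$ with the product action; and that $\omega^\line=\pi^*\omega^\ray$ for $\pi\colon\cB^\line\to\cB^\ray$ the projection, whose restriction to the open and closed subset $\cB^\ray\times\{I\}$ is the tautological identification with $\cB^\ray$. Sections of $\cV^\line$ are the $P^\line$-equivariant maps $F\colon\cB^\line\to V$, and sections of $\cV^\ray$ the $P^\ray$-equivariant maps $\cB^\ray\to V$. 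Define $\Phi$ on sections by $F\mapsto F|_{\cB^\ray\times\{I\}}$, viewed via the identification above as a map $\cB^\ray\to V$; since $P^\ray\times\{I\}$ preserves $\cB^\ray\times\{I\}$ and $\rho^\line|_{P^\ray}=\rho^\ray$, this restriction is $P^\ray$-equivariant, so $\Phi$ is a well-defined bundle map (in terms of representatives of the associated bundles, $\Phi$ sends $[(b,I),v]$ to $[b,v]$). It is manifestly linear on fibers, smooth, and independent of all choices — in particular of the sign $\rho^\line(-I)=\pm I_V$ — so it is natural.

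To see $\Phi$ is an isomorphism I would write down its inverse: given a $P^\ray$-equivariant $f\colon\cB^\ray\to V$, set $\hat f(b,I)=f(b)$ and $\hat f(b,-I)=\rho^\line(-I)f(b)$. Using that $-I$ is central in $P^\line$, that $\rho^\line(-I)^2=\rho^\line(I)=I_V$, and that $\rho^\line|_{P^\ray}=\rho^\ray$, one checks over the four sign cases that $\hat f$ is $P^\line$-equivariant, and it is evidently the unique $P^\line$-equivariant extension of $f$; hence $F\mapsto F|_{\cB^\ray\times\{I\}}$ and $f\mapsto\hat f$ are mutually inverse, so $\Phi$ is a vector bundle isomorphism. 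It then remains to check that $\Phi$ intertwines the tractor connections. Fix a vector field $X$ on $M$ and a lift $\bar X^\ray$ of $X$ to $\cB^\ray$, transport it to $\cB^\ray\times\{I\}$, and extend it arbitrarily to a lift $\bar X^\line$ of $X$ to $\cB^\line$; this is legitimate because the right side of \eqref{connection} is independent of the choice of lift. Since $\omega^\line=\pi^*\omega^\ray$ and $\pi$ restricts to the identification $\cB^\ray\times\{I\}\cong\cB^\ray$, we have $\omega^\line(\bar X^\line)=\omega^\ray(\bar X^\ray)$ along $\cB^\ray\times\{I\}$; since $\bar X^\line$ is tangent to that open and closed subset and agrees there with $\bar X^\ray$, and since $\rho^\line=\rho^\ray$ on $\fs\fo(p+1,q+1)$, restricting the equivariant function $\bar X^\line F+\rho^\line(\omega^\line(\bar X^\line))F$ representing $\nabla^\line_X F$ to $\cB^\ray\times\{I\}$ yields $\bar X^\ray f+\rho^\ray(\omega^\ray(\bar X^\ray))f$, which is the equivariant function representing $\nabla^\ray_X f$, where $f=\Phi(F)$. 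Thus $\Phi\circ\nabla^\line_X=\nabla^\ray_X\circ\Phi$, and $\Phi$ is an isomorphism of tractor bundles with connection.

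The only step that requires real care is the equivariance bookkeeping in the third paragraph — verifying that $\hat f$ is $P^\line$-equivariant on the component $\cB^\ray\times\{-I\}$ — where one genuinely uses both the centrality of $-I$ in $P^\line$ and the relation $\rho^\line(-I)^2=I_V$, which together ensure that either admissible value of $\rho^\line(-I)$ leaves the argument intact; everything else is a direct substitution into the defining formulas.
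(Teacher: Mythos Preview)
Your proof is correct and follows essentially the same route as the paper's: both identify sections of $\cV^\line$ with sections of $\cV^\ray$ by restriction to the open and closed copy of $\cB^\ray$ inside $\cB^\line$, and both deduce compatibility of the connections from the fact that $\omega^\line$ restricts to $\omega^\ray$ there. The paper packages the bundle isomorphism as a special case of the general extension-of-structure-group fact (if $\cB_2$ is the $P_2$-bundle associated to a $P_1$-bundle $\cB_1$ via left translation, then $\cB_2\times_{P_2}V\cong\cB_1\times_{P_1}V$) and declares its proof ``straightforward,'' whereas you have written that straightforward proof out explicitly in the case at hand, including the inverse map and the equivariance check; otherwise the arguments coincide.
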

 
\begin{proof}
We first claim that the bundle associated to 
$(V,\rho^\line)$ for $(\cB^\line,\omega^\line)$ 
is isomorphic to that associated to 
$(V,\rho^\ray)$ for $(\cB^\ray,\omega^\ray)$.  
This is a special case of the following 
general fact, the proof of which is straightforward.  
Suppose that $P_1$ is a Lie subgroup of a Lie group $P_2$ and $\cB_1$ is a
$P_1$-principal bundle over a manifold $M$.  Let $\cB_2$ be the
$P_2$-principal bundle over $M$ associated to the action of $P_1$ on $P_2$
by left 
translation.  Let $(V,\rho)$ be a $P_2$-module and $\cV_2$ the vector
bundle associated to $(V,\rho)$ for $\cB_2$.  Then $\cV_2$ is naturally
isomorphic as a smooth vector bundle to the vector bundle $\cV_1$
associated to $(V,\rho|_{P_1})$ for $\cB_1$.  

It is clear from \eqref{connection} that the tractor connections induced by
the Cartan connections $\omega^\ray$ and $\omega^\line$ correspond under
this isomorphism, since $\cB^\ray$ can be embedded as an open subset of 
$\cB^\line$ on which $\omega^\line$ restricts to $\omega^\ray$.   
\end{proof}

The following corollary is an immediate consequence of
Proposition~\ref{Pline}. 
\begin{corollary}\label{Plinecor}
If $(V,\rho)$ is an $O(p+1,q+1)$-module, then the tractor bundle and
connection for the product parabolic geometry $(\cB^\line,\omega^\line)$
associated to the restriction to $(\fs\fo(p+1,q+1),P^\line)$ of $(V,\rho)$  
are naturally isomorphic to the 
tractor bundle and connection 
for the canonical parabolic geometry $(\cB^\ray,\omega^\ray)$ 
associated to the restriction to
$(\fs\fo(p+1,q+1),P^\ray)$ of $(V,\rho)$.
\end{corollary}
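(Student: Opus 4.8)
The plan is to recognize Corollary~\ref{Plinecor} as the specialization of Proposition~\ref{Pline} in which the two modules are cut out of a single $O(p+1,q+1)$-module by restriction, so that the only work is to check that the hypotheses of the proposition are satisfied.

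First I would fix notation: set $\rho^\ray := \rho|_{P^\ray}$ and $\rho^\line := \rho|_{P^\line}$, in each case equipping $V$ with the $\fs\fo(p+1,q+1)$-action given by the differential of $\rho$. As recalled in the paragraph preceding the corollary, since $O(p+1,q+1)$ is a Lie group with Lie algebra $\fs\fo(p+1,q+1)$ containing both $P^\ray$ and $P^\line$ as subgroups and inducing the chosen $\Ad$, each of $(V,\rho^\ray)$ and $(V,\rho^\line)$ is automatically an $\Ad$-compatible $(\fs\fo(p+1,q+1),P)$-module, and by definition these are exactly what the statement of the corollary means by the restrictions of $(V,\rho)$ to $(\fs\fo(p+1,q+1),P^\ray)$ and to $(\fs\fo(p+1,q+1),P^\line)$.

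Next I would verify that $(V,\rho^\line)$ extends $(V,\rho^\ray)$ in the sense defined just before Proposition~\ref{Pline}: the two $\fs\fo(p+1,q+1)$-actions coincide by construction, both being the differential of $\rho$, and $\rho^\line|_{P^\ray} = (\rho|_{P^\line})|_{P^\ray} = \rho|_{P^\ray} = \rho^\ray$ because $P^\ray \subset P^\line$. (Note that the value $\rho^\line(-I) = \rho(-I)$ is here forced by $\rho$, since $-I \in P^\line$, rather than being freely chosen among the two possibilities contemplated in the discussion before the proposition.) With these checks in hand, Proposition~\ref{Pline} applies directly and yields the natural isomorphism of tractor bundles with connection asserted by the corollary.

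I do not expect a substantive obstacle: the entire content lies in unwinding the word ``extends'' and in the observation that restricting a $G$-module does not alter its underlying $\fg$-representation. The only point worth a second glance is the bookkeeping $P^\ray \subset P^\line \subset O(p+1,q+1)$ together with the fact that all the relevant $\Ad$-representations are restrictions of that of $O(p+1,q+1)$; granting this, the corollary is immediate.
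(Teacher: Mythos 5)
Your proposal is correct and matches the paper's approach: the paper states the corollary as an immediate consequence of Proposition~\ref{Pline}, and your verification that the restrictions $\rho|_{P^\ray}$ and $\rho|_{P^\line}$ are $\Ad$-compatible modules with $\rho|_{P^\line}$ extending $\rho|_{P^\ray}$ is exactly the unwinding the paper leaves implicit.
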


The tractor bundle and connection associated to a given
$O(p+1,q+1)$-module for other normal parabolic geometry realizations of a 
conformal manifold may be different from those for the canonical parabolic  
geometry.  The basic case is the standard
representation $\V$ of $G=O(p+1,q+1)$, since any finite-dimensional
$O(p+1,q+1)$-module is isomorphic to a submodule of a direct sum
of tensor powers of the standard representation.  Consider first the
canonical parabolic geometry.    

\begin{proposition}\label{Pray}
Let $(M,c)$ be a conformal manifold.  Let $(\cB^\ray,\omega^\ray)$ be the 
canonical parabolic geometry of type $(\fs\fo(p+1,q+1),P^\ray)$.    
Let $(\cT,\nabla)$ be the bundle and connection associated to the
restriction to $(\fs\fo(p+1,q+1),P^\ray)$ of the standard 
representation $\V$ of $O(p+1,q+1)$.  Then $(\cT,\nabla)$ is a standard
tractor bundle with normal connection.
\end{proposition}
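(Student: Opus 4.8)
The plan is to verify the hypotheses of the \v{C}ap--Gover Uniqueness Theorem for $\cT=\cB^\ray\times_{P^\ray}\V$ equipped with natural auxiliary data, and then invoke that theorem. Since the standard representation $\V$ of $O(p+1,q+1)$ carries an invariant quadratic form of signature $(p+1,q+1)$, this induces a fiber metric $h$ on $\cT$ of that signature, and $\cT$ has rank $n+2$. Since $P^\ray$ preserves the null ray $\R_+e_0$ it preserves the null line $\span(e_0)\subset\V$, which therefore induces a null line subbundle $\cT^1\subset\cT$. That $\nabla h=0$ is immediate from \eqref{connection}: the tractor connection differs from a flat connection by $\rho(\omega(\bar X))\in\rho(\fg)$, and $\fg=\fs\fo(p+1,q+1)$ acts on $\V$ by endomorphisms skew with respect to $h$, so the induced connection on $S^2\cT^*$ annihilates $h$.

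Next I would produce the isomorphism $\cT^1\cong\cD[-1]$. For $p\in P_0^\ray$ written as in \eqref{P0} one has $p\cdot e_0=\lambda e_0$, while under $\Ad:P_0^\ray\to CO(p,q)$ the same $p$ acts on $TM$ by $x\mapsto\lambda^{-1}mx$; comparing scaling factors shows that $\span(e_0)$ carries exactly the $P^\ray$-representation defining $\cD[-1]$, and since the unipotent radical of $P^\ray$ fixes $e_0$ the identification is $P^\ray$-equivariant and globalizes to a canonical isomorphism $\cT^1\cong\cD[-1]$. Likewise $(\cT^1)^\perp$ is the subbundle induced by $\span(e_0)^\perp=\span(e_0)\oplus\R^n\subset\V$, so $(\cT^1)^\perp/\cT^1$ is the bundle induced by the $P^\ray$-module $\fg_{-1}=\R^n$ with the metric $h_{ij}$, which is the metric $h_0$ of the theorem.

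It remains to check the compatibility condition $\tau^*h_0={\bf g}$ and the normality of $\nabla$, and this is where the content lies. For $\tau$, take $U\in\Gamma(\cT^1)$ corresponding to an equivariant $f:\cB^\ray\to\span(e_0)$ and a lift $\bar X$ of $v\in T_xM$: the term $\bar X f$ in \eqref{connection} is again valued in the fixed subspace $\span(e_0)$ and hence lies in $\cT^1$, while $\rho(\fp)$ preserves $\span(e_0)$, so modulo $\cT^1$ one has $\nabla_vU\equiv\rho(Z)f\pmod{\cT^1}$ with $Z\in\fg_{-1}$ the soldering value of $v$ determined by $\omega^\ray$. Thus $\tau$ is precisely the tautological identification of $TM$ with $\cB^\ray\times_{P_0^\ray}\operatorname{Hom}(\span(e_0),\fg_{-1})$ built into the Cartan connection, and $\tau^*h_0$ is a nowhere-vanishing section of $S^2T^*M[2]$, hence a nonzero constant multiple of ${\bf g}$. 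I expect pinning down that constant to be the only genuine obstacle; the clean way is to pass to a conformal representative $g$, which reduces $\cB^\ray$ to a $P_0^\ray$-bundle on which $\omega^\ray$ takes its standard form (soldering form, plus the Levi-Civita connection of $g$, plus the Schouten tensor of $g$). Substituting this into \eqref{connection} together with the explicit $(n+2)\times(n+2)$ matrix action of $\fs\fo(p+1,q+1)$ on $\V$ both confirms $\tau^*h_0={\bf g}$ and reproduces verbatim the splitting and connection formulas of \cite{BEG}, so that $(\cT,\cT^1,h,\nabla)$ is literally the BEG realization; normality of $\nabla$ follows either from this identification or directly from the fact that the tractor curvature is the associated-bundle image of the curvature of the normal Cartan connection $\omega^\ray$. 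The \v{C}ap--Gover Uniqueness Theorem then identifies $(\cT,\nabla)$ as a standard tractor bundle with its normal connection.
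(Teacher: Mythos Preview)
Your proposal is correct and follows essentially the same approach as the paper: construct $h$, $\cT^1$, and the isomorphism $\cT^1\cong\cD[-1]$ from the $P^\ray$-invariant data on $\V$, check $\nabla h=0$ via \eqref{connection}, and then invoke the \v{C}ap--Gover Uniqueness Theorem. The only difference is one of detail: where you sketch an explicit verification of the compatibility condition $\tau^*h_0={\bf g}$ and of normality (via the soldering form and a reduction to a conformal representative reproducing the BEG formulas), the paper simply records that these follow from $\omega^\ray$ being the normal Cartan connection and cites \cite{CG1}, \cite{CG2} for the computation.
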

\begin{proof}
Since the action of $P^\ray\subset O(p+1,q+1)$ preserves the quadratic form
defining $O(p+1,q+1)$, it follows that the $S^2\V^*$-valued constant
function on 
$\cB$ whose value at each point is this quadratic form is
$P^\ray$-equivariant.  So  
it defines a metric $h$ on $\cT$.  Since the quadratic form  
is annihilated by $\fs\fo(p+1,q+1)$, the formula
\eqref{connection} for the connection shows that $\nabla h=0$.  Now  
$P^\ray$ acts on $e_0$ by multiplication by $\lambda>0$.
It 
follows that $e_0$ determines a nonvanishing global section of $\cT[1]$.  
This section determines a null rank 1 subbundle $\cT^1$ of $\cT$ together
with an  
isomorphism $\cT^1\cong \cD[-1]$.  The compatibility of the data with the 
conformal structure and the normality of $\nabla$ follow from the fact that
$\omega$ is the normal Cartan connection for the structure; 
see \cite{CG1}, \cite{CG2}.  Thus $(\cT,\nabla)$ possesses the structure
defining a standard tractor bundle with normal connection.   
\end{proof}

Recall that if $n$ is odd, then $P^\ray$ and $SP^\line$ are equivalent
from the point of view of the TM\v{C}S Theorem.  So by composing the
principal bundle action on $\cB^\ray$ with the inverse of the isomorphism  
$A\mapsto (\det A)A$ from $P^\ray$ to $SP^\line$, the canonical parabolic
geometry $(\cB^\ray,\omega^\ray)$ can be viewed
as a parabolic geometry of type $(\fs\fo(p+1,q+1),SP^\line)$.  So there is
a tractor bundle and connection associated to the restriction to 
$(\fs\fo(p+1,q+1),SP^\line)$ of the standard representation of
$O(p+1,q+1)$.  This tractor bundle can alternately be 
described as the bundle associated to the restriction to
$(\fs\fo(p+1,q+1),P^\ray)$ of the representation  $\det \otimes \V$ of
$O(p+1,q+1)$ for the canonical parabolic geometry. 

\begin{proposition}\label{SPline}
Let $(M,c)$ be a nonorientable odd-dimensional conformal manifold.   
Let $(\cB,\omega)$ be the corresponding normal parabolic geometry of type  
$(\fs\fo(p+1,q+1),SP^\line)$.  The tractor bundle with normal connection
associated to the 
restriction to $(\fs\fo(p+1,q+1),SP^\line)$ of the standard 
representation $\V$ of $O(p+1,q+1)$ is not a standard tractor bundle.   
\end{proposition}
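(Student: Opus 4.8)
The plan is to reduce the statement to a short orientability computation at the level of smooth vector bundles. By the \v{C}ap-Gover Uniqueness Theorem, any standard tractor bundle is isomorphic, as a tractor bundle and in particular as a smooth vector bundle, to the one constructed in \cite{BEG}; and as observed in \S\ref{standard} that bundle is isomorphic to $TM\oplus\R^2$, which is nonorientable precisely because $M$ is. Hence it suffices to prove that the smooth vector bundle $\cT$ underlying the tractor bundle in question is orientable: then it cannot be isomorphic, even smoothly, to the vector bundle underlying a standard tractor bundle, a fortiori not as a tractor bundle.

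The first step is to identify $\cT$ as a twist of a standard tractor bundle. As observed just before the statement of the proposition, because $n$ is odd the isomorphism $A\mapsto(\det A)A$ of $P^\ray$ onto $SP^\line$ lets us regard the normal parabolic geometry of type $(\fs\fo(p+1,q+1),SP^\line)$ as the canonical parabolic geometry $(\cB^\ray,\omega^\ray)$, and under this identification $\cT$ is the bundle associated for $(\cB^\ray,\omega^\ray)$ to the $(\fs\fo(p+1,q+1),P^\ray)$-module $\det\otimes\V$, where $\det$ is the restriction to $P^\ray\subset O(p+1,q+1)$ of the determinant character. Since an associated bundle of a tensor product of $P^\ray$-modules is the tensor product of the associated bundles, and since by Proposition~\ref{Pray} the bundle associated to $\V$ for $(\cB^\ray,\omega^\ray)$ is a standard tractor bundle $\cT_0$, it follows that $\cT\cong\cT_0\otimes\cL$ as smooth vector bundles, where $\cL\to M$ is the real line bundle associated to the character $\det$ of $P^\ray$.

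The second step is to identify $\cL$ with the orientation line bundle of $M$. For $p\in P^\ray$ written in $1\times n\times1$ blocks (extending \eqref{P0} by arbitrary upper-triangular entries) one has $\det p=\det m$, so the character $\det$ of $P^\ray$ factors through the projection $P^\ray\to P_0^\ray\cong CO(p,q)$ followed by $\lambda m\mapsto\det m$. Since the underlying structure carried by $\cB^\ray$ is the conformal structure, $\cL$ is therefore the orientation line bundle of $M$; in particular $\cL$ is nontrivial because $M$ is nonorientable, $\cL^{\otimes2}$ is trivial, and $\cL\cong\det(TM)$ as real line bundles.

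It then remains only to compute the determinant line bundle of $\cT$. Using $\cT_0\cong TM\oplus\R^2$ and that $\cT$ has rank $n+2$,
$$
\det\cT\;\cong\;\det\cT_0\otimes\cL^{\otimes(n+2)}\;\cong\;\det(TM)\otimes\cL^{\otimes(n+2)}\;\cong\;\cL^{\otimes(n+3)}.
$$
Since $n$ is odd, $n+3$ is even, so $\cL^{\otimes(n+3)}$ is trivial and $\cT$ is orientable, whereas $TM\oplus\R^2$ is not; this completes the proof. I expect no genuine obstacle: the only slightly delicate point is the identification of $\cL$ with the orientation bundle, and the entire phenomenon is just the remark that the determinant twist — nontrivial exactly when $M$ is nonorientable — is undone in $\det\cT$ because $n+3$ is even.
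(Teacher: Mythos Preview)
Your argument is correct and the overall strategy coincides with the paper's: both show that the associated bundle $\cT$ is orientable while the standard tractor bundle, being isomorphic as a smooth bundle to $TM\oplus\R^2$, is not when $M$ is nonorientable; the \v{C}ap--Gover Uniqueness Theorem then gives the conclusion. The difference lies only in how orientability of $\cT$ is established. The paper does this in one line: since $SP^\line\subset SO(p+1,q+1)$, the structure group preserves the standard volume form on $\V$, and the resulting constant $\Lambda^{n+2}\V^*$-valued function on $\cB$ is $SP^\line$-equivariant, hence defines a nonvanishing global volume form on $\cT$. Your route---identifying $\cT\cong\cT_0\otimes\cL$ via the isomorphism $P^\ray\cong SP^\line$, recognizing $\cL$ as the orientation line bundle, and then computing $\det\cT\cong\cL^{\otimes(n+3)}$---reaches the same endpoint but with more machinery than needed. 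Your computation does have the merit of making explicit exactly how the twist by $\det$ interacts with the parity of $n$, which illuminates why the odd-dimensional hypothesis enters; but for the bare proposition the paper's observation that $SP^\line$ already sits inside $SO(p+1,q+1)$ is the efficient path.
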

\begin{proof}
The group $SP^\line$ preserves a volume form on $\V$.  There is
an induced nonvanishing volume form for the associated bundle, so
the tractor bundle is orientable.  But we saw in \S\ref{standard} 
that the standard tractor 
bundle constructed in \cite{BEG} is orientable if and only if $M$ is
orientable.  Since standard tractor bundles are unique up to isomorphism,
it follows that the associated bundle is not a standard tractor bundle if
$M$ is not orientable.
\end{proof}

Recall that the quadric $\cQ$ is not orientable if $n$ is odd and $pq\neq
0$.  Therefore we conclude:

\begin{corollary}\label{quadodd}
Let $n$ be odd and $pq\neq 0$.  Represent $\cQ=SO(p+1,q+1)/SP^\line$.  
The tractor bundle on $\cQ$ associated to the standard representation of
$SP^\line$ is not a standard tractor bundle.
\end{corollary}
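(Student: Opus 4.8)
The plan is to recognize that $\cQ$, in the realization $\cQ=SO(p+1,q+1)/SP^\line$, is precisely an instance of Proposition~\ref{SPline}, so that the assertion follows at once. First I check that $\cQ$ meets the hypotheses of that proposition: its dimension $n=p+q$ is odd by assumption, it carries its standard conformal structure of signature $(p,q)$ coming from the embedding in $\P^{n+1}$ as the set of null lines, and, as recalled in \S\ref{TCS}, $\cQ=(S^p\times S^q)/\Z_2$ is nonorientable because $n$ is odd and $pq\neq 0$.

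Next I identify the relevant parabolic geometry. Recall from \S\ref{TCS} that for $n$ odd the map $A\mapsto(\det A)A$ is an isomorphism of $P^\ray$ onto $SP^\line$ carrying the $\Ad$ representation of $P^\ray$ to that of $SP^\line$, so $(\fs\fo(p+1,q+1),SP^\line)$ is equivalent to $(\fs\fo(p+1,q+1),P^\ray)$ from the point of view of the TM\v{C}S Theorem; moreover the same map identifies $SO(p+1,q+1)$ with $PO(p+1,q+1)$, so under this equivalence $\cQ=SO(p+1,q+1)/SP^\line$ with its Maurer--Cartan form corresponds to the canonical parabolic geometry realization of $\cQ$. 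Hence $SO(p+1,q+1)$ with its Maurer--Cartan form is the normal parabolic geometry of type $(\fs\fo(p+1,q+1),SP^\line)$ corresponding to $\cQ$; this is the geometry $(\cB,\omega)$ appearing in Proposition~\ref{SPline} with $M=\cQ$.

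Finally, the tractor bundle on $\cQ$ associated to the standard representation of $SP^\line$ --- that is, to the restriction to $(\fs\fo(p+1,q+1),SP^\line)$ of the standard representation $\V$ of $O(p+1,q+1)$ via the inclusion $SP^\line\subset O(p+1,q+1)$ --- for this parabolic geometry is precisely the bundle treated in Proposition~\ref{SPline}. Therefore it is orientable while $\cQ$ is not, and so by Proposition~\ref{SPline} it is not a standard tractor bundle. There is no real obstacle here: the only point needing verification is the identification of $SO(p+1,q+1)/SP^\line$ with the normal parabolic geometry of its type realizing the conformal structure on $\cQ$, and this was already carried out in \S\ref{TCS}; everything else is a direct appeal to Proposition~\ref{SPline}.
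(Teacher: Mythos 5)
Your proposal is correct and follows the same route as the paper: the paper derives the corollary immediately from Proposition~\ref{SPline} after noting that $\cQ$ is nonorientable when $n$ is odd and $pq\neq 0$, the identification of $SO(p+1,q+1)/SP^{\line}$ with the unique normal parabolic geometry of type $(\fs\fo(p+1,q+1),SP^{\line})$ having already been established in \S\ref{TCS} via the map $A\mapsto(\det A)A$ and the uniqueness in the TM\v{C}S Theorem. Your write-up simply makes these implicit verifications explicit.
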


Proposition~\ref{Pray} and Corollary~\ref{Plinecor} imply that for the
product parabolic geometry of type $(\fs\fo(p+1,q+1),P^\line)$ on a general
conformal manifold, the tractor
bundle and connection associated to the standard representation are a 
standard tractor bundle with its normal connection.  But as we saw in the
last section, there may be other normal parabolic   
geometries of type $(\fs\fo(p+1,q+1),P^\line)$ corresponding to the
same conformal structure, and the product parabolic geometry might 
not be the most geometrically natural choice.  For other choices the
associated tractor bundle need not be a standard tractor bundle.

\begin{proposition}\label{Plinequadric}
Represent $\cQ=O(p+1,q+1)/P^\line$.  If $pq\neq 0$, then 
the tractor bundle on $\cQ$ associated to the standard representation of
$P^\line$ is not a standard tractor bundle.
\end{proposition}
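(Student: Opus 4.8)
The plan is to separate $(\cT,\nabla)$, the tractor bundle and connection on $\cQ$ associated to the standard representation $\V$ of $O(p+1,q+1)$ restricted to $(\fs\fo(p+1,q+1),P^\line)$, from a standard tractor bundle with its normal connection by comparing holonomies. Under the realization $\cQ=O(p+1,q+1)/P^\line$ the Cartan bundle is $\cB=O(p+1,q+1)\to\cQ$ with Cartan connection the Maurer-Cartan form $\omega$, which has vanishing curvature and so is in particular normal. Because $\V$ is a module for all of $G=O(p+1,q+1)$, the map $[g,v]\mapsto(gP^\line,gv)$ is a well-defined isomorphism of smooth vector bundles $\cT\xrightarrow{\sim}\cQ\times\V$, and a short computation from \eqref{connection} with $\omega$ equal to the Maurer-Cartan form shows it carries $\nabla$ to the trivial flat connection. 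Hence $\nabla$ is flat with $\Hol(\nabla)=\{I\}$.

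On the other hand I would show that the conformal holonomy of $\cQ$---which, being the holonomy of the normal tractor connection on the standard tractor bundle of $\cQ$, is well defined by the \v{C}ap-Gover Uniqueness Theorem---equals $\{\pm I\}$. Realize the space of null rays as $S^p\times S^q=O(p+1,q+1)/P^\ray$, so that by Proposition~\ref{Pray} the bundle associated to $\V|_{P^\ray}$ for this homogeneous geometry is a standard tractor bundle with its normal connection; by the trivialization of the previous paragraph it too is flat with trivial holonomy. The covering map $S^p\times S^q\to\cQ$ is the quotient by the free $\Z_2$-action of the central element $-I\in O(p+1,q+1)$, which lies in $P^\line\setminus P^\ray$, and unwinding the trivialization shows that the nontrivial deck transformation acts by $-1$ on each tractor fiber. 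Thus a standard tractor bundle of $\cQ$ is the quotient of a flat trivial bundle by this action, so its holonomy is generated by $-I_\V$. Since $pq\neq 0$ the cover $S^p\times S^q$ is connected, hence this double cover is nontrivial and the conformal holonomy of $\cQ$ equals $\{\pm I\}\neq\{I\}$; this is precisely the computation recalled in the Introduction and could simply be quoted.

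Finally, if $(\cT,\nabla)$ were a standard tractor bundle with its normal connection, then by the \v{C}ap-Gover Uniqueness Theorem it would be isomorphic, as a bundle with connection, to the standard tractor bundle of $\cQ$ with its normal connection, and so would have holonomy $\{\pm I\}$, contradicting $\Hol(\nabla)=\{I\}$. I expect the delicate step to be the holonomy computation for $\cQ$: identifying the action of the deck transformation on tractors, and confirming that the cover $S^p\times S^q\to\cQ$ is genuinely nontrivial precisely when $pq\neq 0$. It is worth emphasizing that no argument at the level of underlying smooth vector bundles can succeed here: for instance when $p=q=1$ one has $\cQ\cong T^2$, whose standard tractor bundle is trivial, and then the bundle $\cT=O(2,2)\times_{P^\line}\V$ is trivial as a smooth vector bundle as well, so it is only the connection, through its holonomy, that detects the difference.
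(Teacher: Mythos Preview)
Your argument is correct, but it takes a different route than the paper.  The paper's proof is purely topological and never mentions the connection: it observes that the natural null line subbundle $\cT^1$ of the associated bundle---the bundle associated to the $P^\line$-invariant subspace $\span\{e_0\}$---is precisely the tautological line bundle on $\cQ$, whose fiber over a null line is that line itself.  Since the tautological bundle is nontrivial when $pq\neq 0$ while $\cD[-1]$ is trivial, no isomorphism $\cT^1\cong\cD[-1]$ can exist, so the package $(\cT,\cT^1,h,\nabla)$ fails the defining conditions of a standard tractor bundle.  Your holonomy argument instead compares the connections and shows that $(\cT,\nabla)$ cannot be isomorphic \emph{as a bundle with connection} to the standard tractor bundle; this is arguably a slightly stronger conclusion (it rules out any choice of $\cT^1$, not just the natural one), at the cost of having to compute the conformal holonomy of $\cQ$.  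The paper does make exactly your holonomy observation, but only later in the section, as an illustration of how conformal holonomy can differ from the holonomy of a tractor bundle associated to the standard representation.

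One correction to your closing remark: it is not true that no argument at the level of smooth vector bundles succeeds.  You are right that for $p=q=1$ the bundle $\cT$ itself is trivial in both cases, so comparing $\cT$ alone fails.  But the data of a standard tractor bundle includes the distinguished subbundle $\cT^1$, and the paper's argument is precisely a smooth-bundle comparison of the two candidate $\cT^1$'s (tautological versus trivial).  So a bundle-level argument does work once one remembers that $\cT^1$ is part of the structure; this is in fact what the Remark following the proposition in the paper emphasizes.
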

\begin{proof}
The null line subbundle $\cT^1$ is associated to the action of $P^\line$ on
the invariant subspace $\span\{e_0\}$.  It is easily
seen that this associated bundle is the tautological bundle, whose fiber at
a null line is the line itself.  But the tautological bundle on $\cQ$ is
not trivial if $pq\neq 0$, so it cannot be isomorphic to $\cD[-1]$.  
\end{proof}

\noindent
In Proposition~\ref{Plinequadric}, $\cT^1$ is associated to the
representation of $P^\line$ in which $p$ in \eqref{P0} acts by $\lambda$, 
while $\cD[-1]$ is associated to the representation in which $p$ 
acts by $|\lambda|$.  For this homogeneous space these associated 
bundles are not equivalent.

\begin{remark}
The tractor bundle on $\cQ$ in Proposition~\ref{Plinequadric} is trivial
since it is a bundle on a homogeneous space $G/P$ associated to the
restriction to $P$ of a representation of $G$.  In particular it is
orientable.  So if $n$ is odd, an alternate proof of 
Proposition~\ref{Plinequadric} is to derive a contradiction to
orientability of $\cT$ as in the proof of Proposition~\ref{SPline} and 
Corollary~\ref{quadodd}.  
But if $n$ is even, $\cQ$ is orientable and there is no contradiction to
orientability of $\cT$.  In this case the contradiction concerns the
orientability (equivalently, the triviality) of $\cT^1$, not of $\cT$.   

Observe also the following curious state of affairs in
Proposition~\ref{Plinequadric} when $n$ is odd.  The    
standard tractor bundle on $\cQ$ is nontrivial but its distinguished null
line subbundle is trivial.  By contrast, the associated tractor bundle
is trivial but its distinguished null line subbundle is nontrivial.     
\end{remark}

One encounters the same phenomena for oriented conformal structures.
Recall 
from the previous section that oriented conformal structures are equivalent
to normal parabolic geometries of type $(\fs\fo(p+1,q+1),SP^\ray)$.
The same  
proof as in Proposition~\ref{Pray} shows that the associated bundle for the
standard representation of $(\fs\fo(p+1,q+1),SP^\ray)$ is a standard
tractor bundle.  If $n$ is even, $SP^\line$ factors as 
$SP^\line=SP^\ray\times \{\pm I\}$, and the same proof as in
Proposition~\ref{Pline} shows 
that the associated bundle for the product
$SP^\line$-principal bundle is a standard tractor bundle.  But if $n$ is
even and the quadric is realized as $\cQ=SO(p+1,q+1)/SP^\line$, then the
proof of Proposition~\ref{Plinequadric} shows that 
the associated bundle to the standard representation of $SP^\line$ is not a
standard tractor bundle if $pq\neq 0$.

In the theory of Cartan geometries one sometimes declares a particular
connected homogeneous space $G/P$ to be the model,
and a tractor bundle on $G/P$ to be a bundle associated to the restriction
to $P$ of a representation of $G$.  Such tractor bundles are necessarily
trivial and the induced connection is the usual flat connection on a
trivial bundle.  For definite signature conformal
structures the natural choice is to take the model to be the quadric
$\cQ=S^n$, realized either as $O(n+1,1)/P^\line$ or as $O_+(n+1,1)/P^\ray$, 
where $O_+(n+1,1)$ denotes the   
time-preserving subgroup.  The above discussion shows that for either
realization the bundle associated to the standard representation of $G$ is  
the standard tractor bundle and its normal
connection is the induced flat connection.  For indefinite signature
conformal structures, a natural 
choice is to take the model to be $S^p\times S^q=O(p+1,q+1)/P^\ray$ and
again the same statements hold.  (The modification is necessary for
definite signature since $O(n+1,1)/P^\ray$ is not connected.)  It is also
possible to view the quadric as the homogeneous model in the case of
indefinite signature.  Since $PO(p+1,q+1)$ does not admit a standard
representation, the realization $\cQ=PO(p+1,q+1)/PP$ does not admit a
tractor bundle associated to the standard representation under 
the framework of this paragraph.  But the 
realization $\cQ=O(p+1,q+1)/P^\line$ does.  The general results stated
above of course remain true:  the 
bundle associated to the standard representation of $O(p+1,q+1)$ is trivial
and inherits the usual flat connection.  But this is not   
the standard tractor bundle:  the standard tractor bundle has no nontrivial 
parallel sections on $\cQ$.  One must exercise similar care in  
interpreting other results about homogeneous models.  For instance, it is a 
general result (\cite{CSS}) that on a homogeneous parabolic geometry $G/P$,
a BGG sequence resolves the constant sheaf determined by  
the inducing representation of $G$.  But one must keep in mind that the
bundles in the BGG sequences are all defined as associated bundles.  For
example, for the BGG sequence associated to   
the standard representation $\V$ on $\cQ=O(p+1,q+1)/P^\line$ with $pq\neq
0$, the constant sheaf $\V$ is realized as the global kernel of the first
BGG operator 
$\tf(\nabla^2+P)$ acting not on the bundle of densities $\cD[1]$, but on a 
twisted version thereof (the dual to the tautological bundle), and this 
twisted version arises as the projecting part of the associated bundle to
$\V$, which is not the standard tractor bundle for the conformal structure
on $\cQ$.  The global kernel of $\tf(\nabla^2+P)$ acting on $\cD[1]$ for
$\cQ$ is trivial if $pq\neq 0$.

Similar issues arise in the consideration of conformal holonomy.  
We follow the usual practice of defining the conformal holonomy of a
conformal manifold to be the holonomy of   
a standard tractor bundle with its normal tractor connection.  This is 
well-defined by the \v{C}ap-Gover Uniqueness Theorem.  But the above 
considerations demonstrate that one must be careful if one is realizing the 
standard tractor bundle as an associated bundle.  The holonomy of a
tractor bundle defined as an associated bundle to a 
standard representation might not equal the conformal holonomy if the
principal bundle is not chosen correctly.  This
happens already for the quadric $\cQ$ if $pq\neq 0$.  If we realize 
$\cQ=O(p+1,q+1)/P^\line$, then the tractor bundle associated to the
standard representation of $(\fs\fo(p+1,q+1),P^\line)$ has trivial holonomy.
But as discussed above, the
standard tractor bundle of $\cQ$ is the bundle associated to the standard 
representation of $(\fs\fo(p+1,q+1),P^\ray)$ for the realization
$\cQ=PO(p+1,q+1)/PP$, viewed as a parabolic geometry for
$(\fs\fo(p+1,q+1),P^\ray)$ via the isomorphism $P^\ray\cong PP$. 
Its holonomy is $\{\pm I\}$, since parallel
translation in $S^p\times S^q$ to the antipodal point induces $-I$ on a
fiber of the standard tractor bundle on $\cQ$.   Another instance of this 
is the following.
\begin{proposition}\label{hol}
Let $(M,c)$ be a nonorientable odd-dimensional conformal manifold.  Recall
from the TM\v{C}S Theorem that up to isomorphism there is a unique
normal  parabolic geometry of type  
$(\fs\fo(p+1,q+1),SP^\line)$ corresponding to $(M,c)$.  Let $(\cT,\nabla)$
be the bundle and connection associated to the 
restriction to $(\fs\fo(p+1,q+1),SP^\line)$ of the standard representation
of $O(p+1,q+1)$.  Then the holonomy of $(\cT,\nabla)$ is not equal to the 
conformal holonomy.     
\end{proposition}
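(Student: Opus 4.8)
\medskip
\noindent
The plan is to show that the holonomy group of $(\cT,\nabla)$ is contained in $SO(p+1,q+1)$ whereas the conformal holonomy is not. Since the property of a subgroup of $O(p+1,q+1)$ being contained in $SO(p+1,q+1)$ is invariant under conjugation in $O(p+1,q+1)$ (the determinant being a class function), and since each of the two holonomy groups is naturally a subgroup of $O(p+1,q+1)$ up to conjugacy via its respective parallel tractor metric, this will show that the two holonomy groups are not conjugate, hence not equal.

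For $(\cT,\nabla)$: the group $SP^\line$ lies in $SO(p+1,q+1)$, so it preserves both the quadratic form on $\V$ defining $O(p+1,q+1)$ and a volume form $\Omega\in\Lambda^{n+2}\V^*$. As in the proof of Proposition~\ref{Pray}, the constant functions on $\cB$ with these values are $SP^\line$-equivariant and so descend to a tractor metric $h$ on $\cT$ and a section of $\Lambda^{n+2}\cT^*$. Since $\fs\fo(p+1,q+1)$ annihilates the quadratic form and also $\Omega$ (the latter because $\fs\fo(p+1,q+1)$ consists of trace-free endomorphisms), formula \eqref{connection} shows that both $h$ and this volume form are $\nabla$-parallel. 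A parallel metric together with a parallel, nowhere-vanishing volume form reduces the structure group of $\cT$ compatibly with $\nabla$ to $SO(p+1,q+1)$, so $\Hol(\cT,\nabla)\subseteq SO(p+1,q+1)$. (Equivalently: $SP^\line$ acts trivially on $\Lambda^{n+2}\V$, so $\Lambda^{n+2}\cT$ is the trivial bundle with its trivial connection.)

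For the conformal holonomy: by the \v{C}ap-Gover Uniqueness Theorem it equals the holonomy of any standard tractor bundle $(\cT_0,\nabla_0)$ for $(M,c)$, and $\nabla_0$ preserves the tractor metric, so $\Hol(\cT_0,\nabla_0)\subseteq O(p+1,q+1)$. If this holonomy were contained in $SO(p+1,q+1)$, then parallel transport would preserve an orientation of the fibers, which would extend to a global orientation, forcing $\cT_0$ to be orientable. But as recalled in \S\ref{standard}, $\cT_0$ is isomorphic as a smooth bundle to $TM\oplus\R^2$, hence is orientable if and only if $M$ is; since $M$ is nonorientable by hypothesis, $\Hol(\cT_0,\nabla_0)\not\subseteq SO(p+1,q+1)$. (One could equally say: a parallel metric together with $\Hol\subseteq SO$ would yield a parallel, hence nowhere-vanishing, section of $\Lambda^{n+2}\cT_0^*$, contradicting nonorientability.)

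Combining these, $\Hol(\cT,\nabla)$ lies in $SO(p+1,q+1)$ while the conformal holonomy does not, so no element of $O(p+1,q+1)$ conjugates one onto the other, and the two differ. I expect the only point requiring care to be the bookkeeping in the first paragraph --- pinning down exactly in which group, and up to what conjugacy, the two holonomy groups are being compared --- since once that is fixed the determinant obstruction closes the argument with no genuine computation. The observations needed here (equivariance of the constant $\Omega$, triviality of the $\fg$-action on $\Lambda^{n+2}\V$, and the smooth triviality $\cT_0\cong TM\oplus\R^2$) are all already available, so I do not anticipate a serious obstacle.
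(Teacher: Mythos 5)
Your proposal is correct and follows essentially the same route as the paper: exhibit a parallel volume form on $(\cT,\nabla)$ coming from the $SP^\line$-invariant (and $\fs\fo(p+1,q+1)$-annihilated) volume form on $\V$ to get $\Hol(\cT,\nabla)\subseteq SO(p+1,q+1)$, and then note that the conformal holonomy cannot lie in $SO(p+1,q+1)$ since that would force the standard tractor bundle, hence $M$, to be orientable. Your extra paragraph pinning down the conjugation-invariance of the comparison is a sound precision that the paper leaves implicit.
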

\begin{proof}
The argument is similar to the proof of 
Proposition~\ref{SPline}.  The standard representation of $SP^\line$
preserves a volume form, so there is an induced section of the
associated bundle.  The volume form is preserved also by $\fs\fo(p+1,q+1)$, 
so this section is parallel.  Thus the holonomy of the associated bundle
for $SP^\line$ is contained in $SO(p+1,q+1)$.  But if the holonomy of  
the standard tractor bundle is contained in $SO(p+1,q+1)$, then the
standard tractor bundle is orientable, so $M$ is orientable.  
\end{proof}

These issues
concerning standard tractor bundles as associated bundles arose in our work
\cite{GW} concerning conformal structures and ambient metrics of holonomy
$G_2$ (by this we mean the split real form throughout this discussion).   
Nurowski \cite{N} showed that a generic 2-plane field on a 5-manifold $M$ 
induces a conformal structure of signature $(2,3)$ on $M$.  The TM\v{C}S
Theorem implies that generic 2-plane 
fields on oriented 5-manifolds are the underlying structures corresponding
to normal regular parabolic geometries of type $(\fg_2,SQ^\ray)$, where
$SQ^\ray$ is the  
subgroup of $G_2$ preserving a null ray, analogous to $SP^\ray$ above.  For
generic 2-plane fields on nonorientable manifolds one must change the group
$P=SQ^\ray$ to allow   
orientation-reversing transformations in $\Ad(P_0)$.  A first guess is to
take $P$ to be $SQ^\line$, the subgroup of $G_2$ preserving a null line.  
The TM\v{C}S Theorem implies that the category of generic 2-plane fields on
general 5-manifolds is equivalent to the category of normal regular
parabolic geometries of type $(\fg_2,SQ^\line)$.  But just as in
Proposition~\ref{SPline}, the associated bundle to the restriction to
$(\fg_2,SQ^\line)$ of the standard representation of $G_2$ need not be the
standard tractor bundle for Nurowski's induced conformal structure; in
fact, it cannot be if $M$ is not orientable.  
By analogy with the situation above for general conformal structures, 
instead of $SQ^\line$ one should use the subgroup $Q^\ray$ of 
$\{\pm I\}G_2$ 
preserving a null ray.  The TM\v{C}S Theorem again gives an equivalence of
categories with normal regular parabolic geometries of type
$(\fg_2,Q^\ray)$.  And now, just as in Proposition~\ref{Pray}, 
the tractor bundle associated to the restriction to $(\fg_2,Q^\ray)$ of the
standard representation of $\{\pm I\}G_2\subset O(3,4)$ is the standard
tractor bundle of Nurowski's conformal structure with its normal connection.   
$Q^\ray$ is isomorphic to $SQ^\line$, but they are embedded in $O(3,4)$
differently, just as for $P^\ray$ and $SP^\line$ above.  Using the 
realization of the standard tractor 
bundle as the associated bundle for $(\fg_2,Q^\ray)$, the same  
arguments as in \cite{HS}, \cite{GW} for the orientable case now show that 
for general $M$, Nurowski's conformal structures are characterized by
having conformal    
holonomy contained in $\{\pm I\}G_2$, and in the real-analytic case the
corresponding ambient metrics have metric holonomy contained in 
$\{\pm I\}G_2$.


\begin{thebibliography}{BEGM}

\bibitem[BEG]{BEG} T. N. Bailey, M. G. Eastwood and A. R. Gover, {\it 
Thomas's structure bundle for conformal, projective and related
structures}, Rocky Mountain J. Math. {\bf 24} (1994), 1191--1217. 

\bibitem[\v{C}G1]{CG1} A. \v{C}ap and A. R. Gover, {\it Tractor calculi for
parabolic geometries}, Trans. Amer. Math. Soc. {\bf 354} (2002), 1511--1548.

\bibitem[\v{C}G2]{CG2} A. \v{C}ap and A. R. Gover, {\it Standard tractors and
the conformal ambient metric construction}, Ann. Global Anal. Geom. {\bf
24} (2003), 231--259, {\tt arXiv:math/0207016}.  

\bibitem[\v{C}Sc]{CSc} A. \v{C}ap and H. Schichl, {\it Parabolic geometries
  and canonical Cartan connections}, Hokkaido Math. J. {\bf 29} (2000),
  453--505. 

\bibitem[\v{C}Sl]{CSl} A. \v{C}ap and J. Slov\'ak, {\it Parabolic Geometries
  I}, Mathematical Surveys and Monographs 154, Amer. Math. Soc., 2009. 

\bibitem[\v{C}SS]{CSS} A. \v{C}ap, J. Slov\'ak and V. Sou\v{c}ek, {\it
  Bernstein-Gelfand-Gelfand sequences}, Ann. Math. {\bf 154} (2001),
  97--113. 

\bibitem[FG1]{FG1} C. Fefferman and C. R. Graham, {\it
Conformal invariants,} in {\it The Mathematical Heritage of \'Elie Cartan 
(Lyon, 1984)},  Ast\'erisque, 1985, Numero Hors Serie, 95--116.  

\bibitem[FG2]{FG2} C. Fefferman and C. R. Graham, {\it
The Ambient Metric}, Annals of Mathematics Studies 178, Princeton
  University Press, 2012, {\tt arXiv:0710.0919}.  

\bibitem[GW]{GW} C. R. Graham and T. Willse, {\it Parallel tractor
  extension and ambient metrics of holonomy split $G_2$}, 
{\tt arXiv:1109.3504}.

\bibitem[HS]{HS} M. Hammerl and K. Sagerschnig, {\it Conformal structures 
associated to generic rank 2 distributions on 5-manifolds--characterization
and Killing-field decomposition}, SIGMA {\bf 5} (2009), 081, 29 pages, 
{\tt arXiv:0908.0483}.  

\bibitem[M]{M} T. Morimoto, {\it Geometric structures on filtered
  manifolds}, Hokkaido Math. J. {\bf 22} (1993), 263--347.

\bibitem[N]{N} P. Nurowski, {\it Differential equations and conformal
structures}, J. Geom. Phys. {\bf 55} (2005), 19--49, {\tt
arXiv:math/0406400}. 

\bibitem[S]{S} R. W. Sharpe, {\it Differential Geometry}, Graduate Texts in
  Mathematics 166, Springer, 1997.

\bibitem[T]{T} N. Tanaka, {\it On the equivalence problems associated with
  simple graded Lie algebras}, Hokkaido Math. J. {\bf 8} (1979), 23--84. 


\end{thebibliography}
\end{document}